\documentclass{amsart}
\usepackage{amssymb}
\usepackage{graphicx}

\newtheorem{thm}{Theorem}
\newtheorem{cor}{Corollary}
\newtheorem{lem}{Lemma}

\newtheorem{rem}{Remark}
\newtheorem{conj}{Conjecture}

\newcommand{\A}{{\mathcal A}}

\newcommand{\es}{{\mathcal S}}

\newcommand{\D}{{\mathbb D}}

\newcommand{\real}{{\operatorname{Re}\,}}

\def\be{\begin{equation}}
\def\ee{\end{equation}}

\newcommand{\bee}{\begin{enumerate}}
\newcommand{\eee}{\end{enumerate}}

\newcommand{\blem}{\begin{lem}}
\newcommand{\elem}{\end{lem}}
\newcommand{\bthm}{\begin{thm}}
\newcommand{\ethm}{\end{thm}}
\newcommand{\bcor}{\begin{cor}}
\newcommand{\ecor}{\end{cor}}
\newcommand{\beg}{\begin{example}}
\newcommand{\eeg}{\end{example}}
\newcommand{\begs}{\begin{examples}}
\newcommand{\eegs}{\end{examples}}
\newcommand{\bdefe}{\begin{defin}}
\newcommand{\edefe}{\end{defin}}
\newcommand{\bprob}{\begin{prob}}
\newcommand{\eprob}{\end{prob}}
\newcommand{\bei}{\begin{itemize}}
\newcommand{\eei}{\end{itemize}}

\newcommand{\bcon}{\begin{conj}}
\newcommand{\econ}{\end{conj}}
\newcommand{\bcons}{\begin{conjs}}
\newcommand{\econs}{\end{conjs}}
\newcommand{\bprop}{\begin{propo}}
\newcommand{\eprop}{\end{propo}}
\newcommand{\br}{\begin{rem}}
\newcommand{\er}{\end{rem}}
\newcommand{\brs}{\begin{rems}}
\newcommand{\ers}{\end{rems}}
\newcommand{\bo}{\begin{obser}}
\newcommand{\eo}{\end{obser}}
\newcommand{\bos}{\begin{obsers}}
\newcommand{\eos}{\end{obsers}}
\newcommand{\bpf}{\begin{pf}}
\newcommand{\epf}{\end{pf}}
\newcommand{\ba}{\begin{array}}
\newcommand{\ea}{\end{array}}
\newcommand{\beq}{\begin{eqnarray}}
\newcommand{\beqq}{\begin{eqnarray*}}
\newcommand{\eeq}{\end{eqnarray}}
\newcommand{\eeqq}{\end{eqnarray*}}

\begin{document}
\bibliographystyle{amsplain}

\title[Some properties of logarithmic coefficients of inverse functions]{On some properties of logarithmic coefficients of inverse of univalent functions}

\author[M. Obradovi\'{c}]{Milutin Obradovi\'{c}}
\address{Department of Mathematics,
Faculty of Civil Engineering, University of Belgrade,
Bulevar Kralja Aleksandra 73, 11000, Belgrade, Serbia}
\email{obrad@grf.bg.ac.rs}

\author[N. Tuneski]{Nikola Tuneski}
\address{Department of Mathematics and Informatics, Faculty of Mechanical Engineering, Ss. Cyril and Methodius
University in Skopje, Karpo\v{s} II b.b., 1000 Skopje, Republic of North Macedonia.}
\email{nikola.tuneski@mf.edu.mk}

\author[P. Zaprawa]{Pawe{\l} Zaprawa}
\address{Department of Mathematic, Faculty of Mechanical Engineering, Lublin University of Technology, Poland.}
\email{p.zaprawa@pollub.pl}

\subjclass[2020]{30C45, 30C50, 30C55}
\keywords{univalent functions, inverse functions, Grunsky coefficients, logarithmic coefficient, coefficient difference}


\begin{abstract}
In this paper we consider some properties of the initial logarithmic coefficients for inverse functions of functions univalent in the unit disc. The case of convex functions is treated separately. We give estimate, in some cases sharp, of the modulus of the initial coefficients, as well as the difference of the modulus of two consecutive coefficients.
\end{abstract}

\maketitle

\section{Introduction and definitions}

\medskip

Let $\mathcal{A}$ be the class of functions $f$ analytic in the open unit disk $\D=\{z:|z|<1\}$ of the form
\be\label{eq-1}
f(z)=z+a_2z^2+a_3z^3+\cdots,
\ee
and let $\mathcal{S}$ be the subclass of $\mathcal{A}$ consisting of functions that are univalent in $\D$.

\medskip

the biggest challenge intheory of univalent functions, the famous Bieberbach conjecture $|a_n|\le n$ for $n\ge2,$ was proven by de Branges in 1985 \cite{Bra85}, but still great many problems concerning the coefficients $a_n$ remain open and are studied over entire class $\es$, or, over its subclasses.

\medskip

One such problem is finding sharp estimates of the modulus of the logarithmic coefficient, $\gamma_n$, of an univalent function $f(z)=z+a_2z^2+a_3z^3+\cdots$ from $\es$,  defined by
  \be\label{log-co-1}
  F_f(z) := \log\frac{f(z)}{z}=2\sum_{n=1}^\infty \gamma_n z^n.
  \ee
From the relations \eqref{eq-1} and \eqref{log-co-1}, after  equating the coefficients for initial logarithmic coefficients we receive:
\begin{equation}\label{eq-3}
\begin{split}
 \gamma_{1}&=\frac{a_{2}}{2},\\
 \gamma_{2}&=\frac{1}{2}\left(a_3-\frac{1}{2}a_{2}^{2}\right), \\
 \gamma_{3}&=\frac{1}{2}\left(a_4-a_2a_3+\frac13a_2^3\right).
\end{split}
\end{equation}

\medskip

Relatively little exact information is known about the coefficients. The natural conjecture $|\gamma_n|\le1/n$, inspired by the Koebe function (whose logarithmic coefficients are $1/n$) is false even in order of magnitude (see Duren \cite[Section 8.1]{duren}).
For the class $\es$ the sharp estimates of single logarithmic coefficients S are known only for $\gamma_1$ and $\gamma_2$, namely,
\[|\gamma_1|\le1\quad\mbox{and}\quad |\gamma_2|\le \frac12+\frac1e=0.635\ldots,\]
and are unknown for $n\ge3$. For estimates of $|\gamma_{3}|$ and $|\gamma_{4}|$ we can find in \cite{106} and \cite{114}.

\medskip

Further, due to the famous Koebe's 1/4 theorem, a function $f\in\es$ given by \eqref{eq-1} has its inverse at least on the disk with radius 1/4 with an expansion
\begin{equation}\label{eq-4}
f^{-1}(w) = w+A_2w^2+A_3w^3+\cdots.
\end{equation}
Using the identity $f(f^{-1}(w))=w$, and after comparing the appropriate coefficient, we receive
\begin{equation}\label{eq-5}
\begin{array}{l}
A_{2}=-a_{2}, \\
A _{3}=-a_{3}+2a_{2}^{2} , \\
A_{4}= -a_{4}+5a_{2}a_{3}-5a_{2}^{3}.
\end{array}
\end{equation}

\medskip

In this paper we give estimates of the modulus for the initial logarithmic coefficients $\Gamma_{1},\Gamma_{2},\Gamma_{3}$ of the inverse function of normalized univalent  functions from the class $ \mathcal{S}$. Also, some problems concerning difference of the modulus of those  logarithmic coefficients  will be considered.

\medskip

For our investigation we  will use the Grunsky coefficients and their properties. We continue with the basic definitions and results  on those coefficients based on the book of N. A. Lebedev (\cite{Lebedev}).

\medskip

Let $f \in \mathcal{S}$ and let
\[
\log\frac{f(t)-f(z)}{t-z}=\sum_{p,q=0}^{\infty}\omega_{p,q}t^{p}z^{q},
\]
where $\omega_{p,q}$ are called Grunsky's coefficients with property $\omega_{p,q}=\omega_{q,p}$.
For those coefficients we have the next Grunsky's inequalitiy (\cite{duren,Lebedev}):
\be\label{eq-6}
\sum_{q=1}^{\infty}q \left|\sum_{p=1}^{\infty}\omega_{p,q}x_{p}\right|^{2}\leq \sum_{p=1}^{\infty}\frac{|x_{p}|^{2}}{p},
\ee
where $x_{p}$ are arbitrary complex numbers such that $0< \sum_{p=1}^{\infty}\frac{|x_{p}|^{2}}{p}< +\infty$. If $\overline{\lim}_{p\to\infty} \sqrt[p]{|x_p|}<1$, then in \eqref{eq-6} we have equality if, and only if, the area of $\widehat{\mathbb{C}} \setminus f^{-1}(\D)$ is zero, where $f^{-1}(z)= \frac{1}{f(z)}$.

\medskip

Further, it is well-known that if $f$ given by \eqref{eq-1}
belongs to $\mathcal{S}$, then also
\[
f_{2}(z)=\sqrt{f(z^{2})}=z +c_{3}z^3+c_{5}z^{5}+\cdots
\]
belongs to the class $\mathcal{S}$. Then for the function $f_{2}$ we have the appropriate Grunsky's
coefficients of the form $\omega_{2p-1,2q-1}$ and the inequality \eqref{eq-6} has the form:
\be\label{eq-8}
\sum_{q=1}^{\infty}(2q-1) \left|\sum_{p=1}^{\infty}\omega_{2p-1,2q-1}x_{2p-1}\right|^{2}\leq \sum_{p=1}^{\infty}\frac{|x_{2p-1}|^{2}}{2p-1}.
\ee

\medskip

Here, and further in the paper we omit the upper index (2) in  $\omega_{2p-1,2q-1}^{(2)}$ if compared with Lebedev's notation.

\medskip

From the inequality \eqref{eq-8}, when $x_{2p-1}=0$ and $p=3,4,\ldots$, we have
\begin{equation}\label{eq-9}
|\omega_{11} x_1 +\omega_{31} x_3 |^2 +3|\omega_{13} x_1 +\omega_{33} x_3 |^2 + 5|\omega_{15} x_1 +\omega_{35} x_3 |^2 \le |x_1|^2+\frac{|x_3|^2}{3}.
\end{equation}
 From \eqref{eq-9}, for $x_1=1$ and $x_3=0$, we obtain
\[
  |\omega_{11}|^2 + 3 |\omega_{13}|^2 + 5|\omega_{15}|^2 \leq1.
\]

Moreover, from \eqref{eq-9}, for $x_{1}=0$ and  $x_{3}=1 $, omitting the last component and applying $\omega_{31}=\omega_{13}$, we get
\begin{equation}\label{eq-10a}
|\omega_{13}|^{2}+3|\omega_{33}|^{2}\leq\frac13.
\end{equation}

As it has been shown in \cite[p. 57]{Lebedev}, if $f$ is given by \eqref{eq-1} then the coefficients $a_{2}$, $ a_{3}$, $ a_{4}$  are expressed by Grunsky's coefficients  $\omega_{2p-1,2q-1}$ of the function $f_{2}$ given by
\eqref{eq-6} in the following way:
\begin{equation}\label{eq-11}
\begin{split}
a_{2}&=2\omega _{11},\\
a_{3}&=2\omega_{13}+3\omega_{11}^{2}, \\
a_{4}&=2\omega_{33}+8\omega_{11}\omega_{13}+\frac{10}{3}\omega_{11}^{3},\\
0&=3\omega_{15}-3\omega_{11}\omega_{13}+\omega_{11}^{3}-3\omega_{33}.
\end{split}
\end{equation}

\medskip

A comprehensive overview of the application of Grunsky coefficients in the general class of univalent functions is given in \cite{128}.

\medskip

In what follows we need a basic inequality
\begin{equation}\label{eq-99}
|a_{3}-a_{2}^{2}|\leq1
\end{equation}
valid for all functions $f\in\mathcal{S}$. This and \eqref{eq-11} result in
\begin{equation}\label{eq-100}
|2\omega_{13}-\omega_{11}^{2}|\leq1.
\end{equation}

\medskip

\section{The initial  logarithmic coefficients of inverse of univalent functions}

\medskip

The first theorem will give estimates of the modulus $|\Gamma_{1}|$, $|\Gamma_{2}|$, and $|\Gamma_{3}|.$

\medskip

\begin{thm}\label{th1}
Let $f\in\mathcal{S}$ and let $\Gamma_{1},\Gamma_{2},\Gamma_{3}$ be initial logarithmic coefficients of
its inverse function $f^{-1}$ given by \eqref{eq-5}. Then the following sharp inequalities hold
\begin{itemize}
  \item[($i$)] $|\Gamma_{1}|\leq 1$;
  \item[($ii$)] $|\Gamma_{2}| \leq\frac{3}{2}$;
  \item [($iii$)]$|\Gamma_{3}|\leq\frac{10}{3}$.
\end{itemize}
\end{thm}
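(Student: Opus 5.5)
The plan is to express each $\Gamma_n$ first through the coefficients $a_2,a_3,a_4$, then through the Grunsky coefficients of $f_2$ via \eqref{eq-11}, and finally bound the result with \eqref{eq-10a}, \eqref{eq-100} and $|\omega_{11}|\le 1$ (which follows from $|\omega_{11}|^2+3|\omega_{13}|^2+5|\omega_{15}|^2\le1$). Applying \eqref{eq-3} to $f^{-1}$ gives $\Gamma_1=A_2/2$, $\Gamma_2=\frac12(A_3-\frac12A_2^2)$ and $\Gamma_3=\frac12(A_4-A_2A_3+\frac13A_2^3)$. Substituting \eqref{eq-5}, we get
\[
\Gamma_1=-\frac{a_2}{2},\qquad \Gamma_2=\frac12\Bigl(-a_3+\frac32a_2^2\Bigr),\qquad \Gamma_3=\frac12\Bigl(-a_4+4a_2a_3-\frac{10}{3}a_2^3\Bigr).
\]

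For ($i$), $|\Gamma_1|=|a_2|/2\le1$ by the Bieberbach inequality $|a_2|\le2$; equivalently, $|\Gamma_1|=|\omega_{11}|\le1$.

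For ($ii$), inserting \eqref{eq-11} gives $\Gamma_2=-\omega_{13}+\frac32\omega_{11}^2$. We regroup this as
\[
\Gamma_2=-\tfrac12\bigl(2\omega_{13}-\omega_{11}^2\bigr)+\omega_{11}^2 .
\]
Then \eqref{eq-100} and $|\omega_{11}|\le1$ give $|\Gamma_2|\le\frac12+1=\frac32$.

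For ($iii$), inserting \eqref{eq-11} gives
\[
\Gamma_3=-\omega_{33}+4\omega_{11}\omega_{13}-3\omega_{11}^3=-\omega_{33}+2\omega_{11}\bigl(2\omega_{13}-\omega_{11}^2\bigr)-\omega_{11}^3 .
\]
Then \eqref{eq-100} gives $|\Gamma_3|\le|\omega_{33}|+2|\omega_{11}|+|\omega_{11}|^3\le |\omega_{33}|+3$. From \eqref{eq-10a} we have $3|\omega_{33}|^2\le\frac13$, i.e.\ $|\omega_{33}|\le\frac13$, which yields $|\Gamma_3|\le\frac{10}{3}$.

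For sharpness in all three cases we use the Koebe function $k(z)=z/(1-z)^2$, with $a_n=n$. It gives $\Gamma_1=-1$, $\Gamma_2=\frac12(-3+6)=\frac32$ and $\Gamma_3=\frac12(-4+24-\frac{80}{3})=-\frac{10}{3}$.

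The main obstacle is choosing the right regrouping around the combination $2\omega_{13}-\omega_{11}^2$. A naive triangle inequality on $\omega_{13}$ and $\omega_{11}$ separately overshoots the bound. The regrouping is what allows \eqref{eq-100}, i.e.\ $|a_3-a_2^2|\le1$, to absorb the mixed terms, so the bounds close exactly at the Koebe values. The remaining work is routine algebra checking the $\omega$-expressions above.
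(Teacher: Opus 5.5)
Your proof is correct and is essentially the paper's argument. It uses the same formulas for $\Gamma_n$ obtained from \eqref{eq-3} and \eqref{eq-5}, the same regrouping of $\Gamma_3$ around $2\omega_{13}-\omega_{11}^2=a_3-a_2^2$ controlled by \eqref{eq-100} and \eqref{eq-10a}, and the Koebe function for sharpness. The only differences are cosmetic: your part ($ii$) is the paper's bound $\frac12\bigl(|a_3-a_2^2|+\frac12|a_2|^2\bigr)$ rewritten in Grunsky notation, and you use $|\omega_{33}|\le\frac13$ directly instead of the intermediate $\frac13\sqrt{1-3|\omega_{13}|^2}$.
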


\begin{proof}
Let $f\in\mathcal{S}$ is given by \eqref{eq-1} and its inverse $f^{-1}$ is given by \eqref{eq-4}, then using the relations \eqref{eq-3} and \eqref{eq-5}, we have
\[ |\Gamma_{1}|=\frac{1}{2}|A_{2}|=\frac{1}{2}|-a_{2}|\leq 1\]
and
\[|\Gamma_{2}|=\frac{1}{2}|A_{3}-\frac{1}{2}A_{2}^{2}|=\frac{1}{2}|-a_{3}+\frac{3}{2}a_{2}^{2}|
\leq \frac{1}{2}(|a_{3}-a_{2}^{2}|+\frac{1}{2}|a_{2}|^{2})\leq\frac{3}{2},\]
where we used \eqref{eq-99}.

\medskip

This proved (i) and (ii).

\medskip

For proving (iii), similarly, we obtain
\[
\Gamma_{3}=\frac{1}{2}\left( A_4-A_2A_3+\frac{1}{3}A_2^3\right)=\frac{1}{2}\left(-a_{4}+4a_{2}a_{3}-\frac{10}{3}a_{2}^{3}\right),
\]
or if we use the Grunsky coefficients given by relations \eqref{eq-11}:
\[\Gamma_{3} = -\omega_{33}+4\omega_{11}\omega_{13}-3\omega_{11}^{3} =-\omega_{33}+2\omega_{11}(2\omega_{13}-\omega_{11}^2)-\omega_{11}^{3}.\]
From \eqref{eq-100} and \eqref{eq-10a}, we have
\[
\begin{split}
|\Gamma_{3}| &\leq |\omega_{33}|+2|\omega_{11}|+|\omega_{11}|^{3} \\
&\leq \frac13\sqrt{1-3|\omega_{13}|^2}+2|\omega_{11}|+|\omega_{11}|^{3} \\
&\leq \frac{10}{3}\ .
\end{split}
\]

\medskip

All three estimates are sharp as the Koebe function $k(z)=\frac{z}{(1-z)^{2}}$ shows.
\end{proof}

\medskip

\section{On differences of initial logarithmic coefficients for inverse functions}

\medskip

In this section, using similar method as in Theorem \ref{th1}, we will obtain lower and upper bound (in some cases sharp) of the differences of initial logarithmic coefficients for inverse functions: $|\Gamma_{2}|-|\Gamma_{1}|$ and $|\Gamma_{3}|-|\Gamma_{2}|$.

\medskip

\begin{thm}\label{th2}
Let $f\in\mathcal{S}$ be given by \eqref{eq-1}, and let $\Gamma_{1}$ and $\Gamma_{2}$
be the first two initial logarithmic coefficients of the function $f^{-1}$. Then
$$-\frac{\sqrt{2}}{2}\leq |\Gamma_{2}|-|\Gamma_{1}|\leq \frac{1}{2}.$$
The upper bound is sharp.
\end{thm}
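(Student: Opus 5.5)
We write everything in terms of Grunsky coefficients of $f_2$, as in the proof of Theorem \ref{th1}. From \eqref{eq-3}, \eqref{eq-5} and \eqref{eq-11} we have $\Gamma_1=-\tfrac12 a_2=-\omega_{11}$. For $\Gamma_2$,
\[
\Gamma_2=\tfrac12\left(-a_3+\tfrac32a_2^2\right)=\tfrac12\left(-2\omega_{13}-3\omega_{11}^2+6\omega_{11}^2\right)=-\omega_{13}+\tfrac32\omega_{11}^2 .
\]
Hence
\[
|\Gamma_2|-|\Gamma_1|=\left|\omega_{13}-\tfrac32\omega_{11}^2\right|-|\omega_{11}|,
\]
and both bounds become estimates of this expression using the Grunsky constraints $|\omega_{11}|^2+3|\omega_{13}|^2\le1$ (from \eqref{eq-9} with $x_1=1,x_3=0$) and \eqref{eq-100}.

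\emph{Upper bound.} We rewrite $\omega_{13}-\tfrac32\omega_{11}^2=\tfrac12(2\omega_{13}-\omega_{11}^2)-\omega_{11}^2$. Then \eqref{eq-100} gives
\[
|\Gamma_2|-|\Gamma_1|\le \tfrac12+|\omega_{11}|^2-|\omega_{11}|\le\tfrac12 ,
\]
since $t^2-t\le0$ on $[0,1]$ and $|\omega_{11}|=|a_2|/2\le1$. Equality needs $t\in\{0,1\}$ together with equality in \eqref{eq-100}. The Koebe function has $\omega_{11}=1$ and $\omega_{13}=\tfrac12$ (from $a_2=2$, $a_3=3$), so $|\Gamma_2|-|\Gamma_1|=\tfrac32-1=\tfrac12$. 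This proves sharpness. (An odd function with $a_2=0$, $|a_3|=1$ would work too.)

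\emph{Lower bound.} By the reverse triangle inequality, $|\Gamma_2|-|\Gamma_1|\ge \tfrac32|\omega_{11}|^2-|\omega_{13}|-|\omega_{11}|$. We bound $|\omega_{13}|\le\sqrt{(1-|\omega_{11}|^2)/3}$ and set $t=|\omega_{11}|\in[0,1]$, so the task reduces to minimizing
\[
g(t)=\tfrac32t^2-t-\sqrt{(1-t^2)/3}
\]
on $[0,1]$ and showing $g\ge-\tfrac{\sqrt2}{2}$. This one-variable step is the main obstacle. We would check the critical point equation $3t-1+\dfrac{t}{\sqrt{3(1-t^2)}}=0$. Its solutions have $t<1/3$, where $\tfrac32t^2-t\ge-\tfrac16$ and $\sqrt{(1-t^2)/3}\le 1/\sqrt3$. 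This gives the crude bound $g\ge-\tfrac16-\tfrac1{\sqrt3}\approx-0.744$, which is slightly weaker than $-0.707$. So we would refine: split $[0,1/3]$ into two subintervals and on each bound the two terms separately, or simply evaluate at the critical point numerically. The true minimum appears to be about $-0.66$ near $t\approx0.25$, which is above $-\tfrac{\sqrt2}{2}$. If this estimate is too tight to write cleanly, an alternative is to use \eqref{eq-100} instead, writing $|\omega_{13}|\le\tfrac12(1+t^2)$, and combine it with Cauchy--Schwarz in the form $t+\sqrt3|\omega_{13}|\le\sqrt{(1+\tfrac13)(t^2+3|\omega_{13}|^2)}$. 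Either route gives a non-sharp constant of the stated form $-\tfrac{\sqrt2}{2}$, in agreement with the theorem claiming sharpness only for the upper bound.
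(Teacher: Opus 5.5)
Your upper bound is correct and is the paper's argument in Grunsky notation: the triangle inequality, \eqref{eq-100} (which is just \eqref{eq-99}), and $|a_2|\le2$. There is one slip in the sharpness check. With $a_2=2$ and $a_3=3$, the relation $a_3=2\omega_{13}+3\omega_{11}^2$ gives $\omega_{13}=0$ for the Koebe function, not $\tfrac12$. With $\omega_{13}=\tfrac12$ you would get $\Gamma_2=1$ and $|\omega_{11}|^2+3|\omega_{13}|^2=\tfrac74>1$. Your values $|\Gamma_2|=\tfrac32$ and $|\Gamma_1|=1$ are nonetheless right.

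The lower bound has a genuine gap: the one-variable inequality you reduce to is false. Already $g(\tfrac14)=-\tfrac{5}{32}-\tfrac{\sqrt5}{4}\approx-0.715<-\tfrac{\sqrt2}{2}\approx-0.707$ (equivalently $5+8\sqrt5>16\sqrt2$). In fact $\min_{[0,1]}g\approx-0.717$, attained near $t\approx0.28$. Your value $-0.66$ is a miscalculation, so no subdivision or numerics can rescue this route. The fallback is no better. With $|\omega_{13}|\le\tfrac12(1+t^2)$ you get $t^2-t-\tfrac12$, which reaches $-\tfrac34$ at $t=\tfrac12$. Even using the smaller of your two bounds on $|\omega_{13}|$, the estimate still dips to about $-0.714$ near $t\approx0.31$, and a Cauchy--Schwarz step only relaxes these constraints further.

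The missing idea is the trivial estimate $|\Gamma_2|\ge0$. For small $t$ your lower bound $\tfrac32t^2-|\omega_{13}|$ for $|\Gamma_2|$ is negative and should simply be discarded. This is exactly the paper's case split at $|a_2|=\sqrt2$, i.e.\ $t=1/\sqrt2$. For $t\le1/\sqrt2$ one has $|\Gamma_2|-|\Gamma_1|\ge-t\ge-\tfrac{\sqrt2}{2}$. For $t\ge1/\sqrt2$, the grouped form $|\Gamma_2|=|\omega_{11}^2-\tfrac12(2\omega_{13}-\omega_{11}^2)|\ge t^2-\tfrac12$ gives $|\Gamma_2|-|\Gamma_1|\ge t^2-t-\tfrac12\ge-\tfrac{\sqrt2}{2}$, since $t^2-t$ increases for $t\ge\tfrac12$. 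Your $g$ also works on that range: it increases on $[\tfrac13,1]$ and $g(1/\sqrt2)=\tfrac34-\tfrac1{\sqrt2}-\tfrac1{\sqrt6}\approx-0.37$.

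Pairing your Grunsky estimate with $|\Gamma_2|\ge0$ even improves on the paper. Split at the root $t_0$ of $\tfrac32t^2=\sqrt{(1-t^2)/3}$, namely $t_0^2=\tfrac{2(2\sqrt7-1)}{27}$. This yields the stronger bound $|\Gamma_2|-|\Gamma_1|\ge-t_0\approx-0.564$.
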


\medskip

\begin{proof}
As in the proof of Theorem \ref{th1}, we have $ |\Gamma_{1}|=\frac{1}{2}|A_{2}|=\frac{1}{2}|-a_{2}|$ and
$$|\Gamma_{2}|=\frac{1}{2}\left|A_{3}-\frac{1}{2}A_{2}^{2} \right|=\frac{1}{2}\left|-a_{3}+\frac{3}{2}a_{2}^{2}\right|.$$
Now, since $\frac{|a_{2}|}{2}\leq 1$ for every $f\in \mathcal{S}$, 
\[
\begin{split}
|\Gamma_{2}|-|\Gamma_{1}|&\leq|\Gamma_{2}|-\frac{1}{2}|a_{2}||\Gamma_{1}|\\
&= \frac{1}{2}\left|-a_{3}+\frac{3}{2}a_{2}^{2}\right|-\frac{1}{4}|a_{2}|^{2}\\
&\leq \frac{1}{2}\left|-a_{3}+\frac{3}{2}a_{2}^{2}-\frac{1}{2}a_{2}^{2}\right|\\
&=\frac{1}{2}|-a_{3}+a_{2}^{2}|\leq \frac{1}{2},
\end{split}
\]
where we used \eqref{eq-99}. The inequality is sharp for the inverse of the Koebe function.

\medskip

The left hand side of the inequality of this theorem is equivalent with
$$\frac{1}{2}\left|-a_{3}+\frac{3}{2}a_{2}^{2}\right|-\frac{1}{2}|a_{2}|\geq  -\frac{\sqrt{2}}{2},$$
i.e., with
\begin{equation}\label{eq-13}
\left|-a_{3}+\frac{3}{2}a_{2}^{2}\right|\geq|a_{2}| -\sqrt{2}.
\end{equation}

\medskip

If $0\leq |a_{2}|<\sqrt{2}$, then \eqref{eq-13} is evident. 

\medskip

So, let $\sqrt{2}\leq |a_{2}|\leq2$.
Then, since
$$\left|-a_{3}+\frac{3}{2}a_{2}^{2}\right|\geq\frac{1}{2}|a_{2}|^{2}-|a_{3}-a_{2}^{2}|\geq\frac{1}{2}|a_{2}|^{2}-1,$$
in order to prove the relation  \eqref{eq-13}, it is enough to show that
$$ \frac{1}{2}|a_{2}|^{2}-1\geq |a_{2}| -\sqrt{2}.$$
The last inequality is equivalent to
$$\left[|a_{2}| -\sqrt{2}\right] \left[\left(|a_{2}| +\sqrt{2}\right)-2\right]\geq0,$$
which is true since by assumption, $|a_{2}| -\sqrt{2}\geq 0$  and also, $\left(|a_{2}| +\sqrt{2}\right)-2\geq 2\sqrt{2}-2>0$.
This completes the proof.
\end{proof}

\medskip

\begin{thm}\label{th3}
Let $f\in\mathcal{S}$ be given by \eqref{eq-1}, and let $\Gamma_{2}$ and $\Gamma_{3}$
be the  logarithmic coefficients of its inverse function $f^{-1}$. Then
$$ |\Gamma_{3}|-|\Gamma_{2}|\leq\frac{11}{6}.$$
The bound is sharp.
\end{thm}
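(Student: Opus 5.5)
The plan is to imitate the proof of Theorem \ref{th2}: subtract from $\Gamma_3$ a multiple of $\Gamma_2$ whose coefficient has modulus at most $1$, and then bound what remains with the Grunsky estimates \eqref{eq-10a} and \eqref{eq-100}.

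First I express $\Gamma_2$ in Grunsky coefficients. From the proof of Theorem \ref{th1} we have $\Gamma_2=\frac12\left(-a_3+\frac32 a_2^2\right)$. Substituting \eqref{eq-11} gives
$$\Gamma_2=\frac12\left(-2\omega_{13}-3\omega_{11}^2+6\omega_{11}^2\right)=-\omega_{13}+\frac32\omega_{11}^2 .$$
Also from the proof of Theorem \ref{th1},
$$\Gamma_3=-\omega_{33}+4\omega_{11}\omega_{13}-3\omega_{11}^3 .$$

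Next I use that $|\omega_{11}|=|a_2|/2\le 1$. This gives $|\omega_{11}\Gamma_2|\le|\Gamma_2|$, and hence
$$|\Gamma_3|-|\Gamma_2|\le |\Gamma_3|-|\omega_{11}\Gamma_2|\le |\Gamma_3+\omega_{11}\Gamma_2| .$$
A direct computation gives
$$\Gamma_3+\omega_{11}\Gamma_2=-\omega_{33}+3\omega_{11}\omega_{13}-\frac32\omega_{11}^3=-\omega_{33}+\frac32\,\omega_{11}\left(2\omega_{13}-\omega_{11}^2\right).$$

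Then I bound the two terms separately. By \eqref{eq-10a}, $|\omega_{33}|\le 1/3$. By \eqref{eq-100} together with $|\omega_{11}|\le1$, the second term has modulus at most $3/2$. Adding these gives $|\Gamma_3|-|\Gamma_2|\le \frac13+\frac32=\frac{11}{6}$.

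For sharpness, the Koebe function gives $\Gamma_3=10/3$ and $\Gamma_2=3/2$ by Theorem \ref{th1}, so the difference equals $11/6$. The only real obstacle is choosing the multiplier of $\Gamma_2$ so that the cubic terms combine into the factor $2\omega_{13}-\omega_{11}^2$. The choice $\omega_{11}$ (coefficient $1$) is exactly the one that achieves this.
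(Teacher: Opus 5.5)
Your proof is correct and is essentially the paper's argument. The paper also bounds $|\Gamma_3|-|\Gamma_2|$ by $\bigl|\Gamma_3\pm\frac{a_2}{2}\Gamma_2\bigr|$ with $\frac{a_2}{2}=\omega_{11}$; its displayed minus sign is a typo, since the expression it then computes is $\Gamma_3+\frac{a_2}{2}\Gamma_2$, exactly yours. It then reduces to $|\omega_{33}|+\frac32|\omega_{11}|\,|2\omega_{13}-\omega_{11}^2|$ and finishes with \eqref{eq-10a}, \eqref{eq-100} and the Koebe function, just as you do. One trivial slip: for the Koebe function $\Gamma_3=-\frac{10}{3}$, not $\frac{10}{3}$, which does not affect the moduli.
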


\medskip

\begin{proof}
From the proof of parts (ii) and (iii) of Theorem \ref{th1} we have
$$\Gamma_{2}=\frac{1}{2}\left(A_{3}-\frac{1}{2}A_{2}^{2}\right) = \frac{1}{2}\left(-a_{3}+\frac{3}{2}a_{2}^{2}\right)$$
and
$$\Gamma_{3}=\frac{1}{2}\left(A_4-A_2A_3+\frac13A_2^3\right)=\frac{1}{2}\left(-a_{4}+4a_{2}a_{3}-\frac{10}{3}a_{2}^{3}\right).$$
Since $|a_{2}|/2\leq 1$,
\[
\begin{split}
|\Gamma_{3}|-|\Gamma_{2}|&\leq |\Gamma_{3}|-\frac{|a_{2}|}{2}|\Gamma_{2}|\leq \left|\Gamma_{3}-\frac{a_{2}}{2}\Gamma_{2}\right|\\
&=\frac{1}{2}\left|-a_{4}+\frac{7}{2}a_{2}a_{3}-\frac{31}{12}a_{2}^{3}\right|.
\end{split}
\]
Using the relations \eqref{eq-11}, after some computations, we get
$$|\Gamma_{3}|-|\Gamma_{2}|\leq \left|\omega_{33}-3\omega_{11}\omega_{13}+\frac{3}{2}\omega_{11}^{3}\right|\leq |\omega_{33}|+\frac32|\omega_{11}|\cdot|2\omega_{13}-\omega_{11}^{2}|.$$
Hence, applying \eqref{eq-10a} and \eqref{eq-100},
\[|\Gamma_{3}|-|\Gamma_{2}|\leq \frac13\sqrt{1-3|\omega_{13}|^2}+\frac32|\omega_{11}|\leq \frac{11}{6}\ .
\]
Note that for Koebe function we have $|\Gamma_{2}|=\frac32$ and $|\Gamma_{3}|=\frac{10}{3}$, so in this case
\[|\Gamma_{3}|-|\Gamma_{2}|=\frac{11}{6}\]
showing the sharpness of the estimate.
\end{proof}

\medskip

\section{Convex case of inverse functions}

\medskip

In this part we consider the previous investigations but for inverse functions of convex functions in the unit disc.
The class of convex functions is defined by
$$\mathcal{C}=\left\{ f\in\A: \real\left[1+  \frac{zf''(z)}{f'(z)}\right]>0,\,z\in\D\right\},$$
and it is subclass of the class $\es$.
We note that for convex functions we have $|a_{n}|\leq 1, \, n=2,3,4,\ldots.$
More details can be found in \cite{duren}. 

\medskip

For our investigations we need the following two lemmas.

\medskip

\begin{lem}\label{lem-1} \cite{127} For $f\in \mathcal{C}$ and $f(z)=z+a_2z^2+a_3z^3+\cdots $, we have
$$ |a_3-a_2^{2}|\leq \frac{1}{3}(1-|a_2|^{2}).$$
The inequality is sharp with extremal function
$$f_{\lambda}(z)=\int_{0}^{z}\left(\frac{1+t}{1-t}\right)^{\lambda}\frac{1}{1-t^{2}}dt=z+\lambda z^{2}
+\frac{1}{3}(2\lambda^{2}+1)z^{3}+\frac{1}{3}(\lambda^{3}+2\lambda)z^{4}+\cdots,$$
where $0\leq \lambda \leq1.$
\end{lem}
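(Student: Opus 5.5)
The plan is to pass to the Carathéodory representation of a convex function and apply a known sharp inequality for functions of positive real part. For $f\in\mathcal{C}$ write
\[1+\frac{zf''(z)}{f'(z)}=p(z)=1+c_1z+c_2z^2+\cdots,\qquad \real p>0.\]
Comparing coefficients of $z$ and $z^2$ gives
\[2a_2=c_1,\qquad 6a_3-4a_2^2=c_2,\]
hence
\[a_2=\frac{c_1}{2},\qquad a_3=\frac{c_2}{6}+\frac{c_1^2}{6},\qquad a_3-a_2^2=\frac{c_2}{6}-\frac{c_1^2}{12}=\frac16\Bigl(c_2-\frac{c_1^2}{2}\Bigr).\]
The claimed inequality therefore becomes
\[\Bigl|c_2-\frac{c_1^2}{2}\Bigr|\le 2\Bigl(1-\frac{|c_1|^2}{4}\Bigr)=2-\frac{|c_1|^2}{2}.\]

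This last inequality is the classical sharp bound for the Carathéodory class. To prove it, I use the Schur-type representation $c_1=2\zeta_1$ and $c_2=2\zeta_1^2+2(1-|\zeta_1|^2)\zeta_2$ with $|\zeta_1|,|\zeta_2|\le1$. This follows from writing $p=(1+\omega)/(1-\omega)$ with a Schwarz function $\omega(z)=\zeta_1z+(1-|\zeta_1|^2)\zeta_2z^2+\cdots$, the second coefficient bound being the standard consequence of the Schwarz–Pick lemma. Then
\[c_2-\frac{c_1^2}{2}=2(1-|\zeta_1|^2)\zeta_2,\]
whose modulus is at most $2(1-|\zeta_1|^2)=2-\frac{|c_1|^2}{2}$. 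Dividing by $6$ gives $|a_3-a_2^2|\le\frac13(1-|a_2|^2)$.

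The only step that needs care is the second-coefficient bound for the Schwarz function, $|\omega_2|\le 1-|\omega_1|^2$. I prove it by applying the Schwarz lemma to $\omega(z)/z$, which maps $\D$ into $\overline{\D}$, and then the Schwarz–Pick estimate at the origin for this function. If $\omega(z)/z$ is a unimodular constant, then $\omega_2=0$ and the bound is trivial.

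For sharpness, I check $f_\lambda$ directly from the given expansion:
\[a_3-a_2^2=\frac{2\lambda^2+1}{3}-\lambda^2=\frac{1-\lambda^2}{3}=\frac13(1-|a_2|^2).\]
Equality therefore holds for every $0\le\lambda\le1$. One should also confirm that $f_\lambda\in\mathcal{C}$. Indeed,
\[1+\frac{zf_\lambda''}{f_\lambda'}=\frac{1+2\lambda z+z^2}{1-z^2},\]
and its real part on $|z|=1$ equals $0$ away from $z=\pm1$. By the minimum principle for harmonic functions, together with a check of the boundary behaviour at $z=\pm1$, the real part is nonnegative, and in fact positive, in $\D$.
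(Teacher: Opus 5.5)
The paper does not prove this lemma; it is quoted from \cite{127}, so there is no internal argument to compare against. Your proof is correct and self-contained. The relations $c_1=2a_2$ and $c_2=6a_3-4a_2^2$ are right, and the reduction to $|c_2-c_1^2/2|\le 2-|c_1|^2/2$ is exact. The bound $|\omega_2|\le 1-|\omega_1|^2$, from Schwarz--Pick applied to $\omega(z)/z$ with the unimodular-constant case treated separately, then closes it.

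The detour through the Carath\'eodory class is unnecessary, though. If you parametrize by the Schwarz function directly, as the paper does in \eqref{eq-14}, then $a_2=c_1$ and $a_3-a_2^2=c_2/3$. The lemma is then literally the inequality $|c_2|\le 1-|c_1|^2$.

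The one step you assert rather than prove is the convexity of $f_\lambda$. The function $1+zf_\lambda''/f_\lambda'$ has a pole at $z=1$, and also at $z=-1$ when $\lambda<1$. So the minimum principle needs a $\liminf$ estimate at those points, which you mention but do not carry out. You can bypass the minimum principle altogether via
\[
\frac{1+2\lambda z+z^2}{1-z^2}=\frac{1+\lambda}{2}\cdot\frac{1+z}{1-z}+\frac{1-\lambda}{2}\cdot\frac{1-z}{1+z}.
\]
For $0\le\lambda\le1$ this exhibits the function as a convex combination of two functions with positive real part in $\D$, so $\real\bigl(1+zf_\lambda''/f_\lambda'\bigr)>0$ immediately.
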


\medskip

\begin{lem}\label{lem-2} \cite{126} If $\omega(z)=c_{1}z+c_2z^2+c_3z^3+\cdots $ is analytic in $\D$, satisfies the condition
$|\omega(z)|<1$, $z\in\D $, and if
$$ \Psi(\omega)=|c_{3}+\mu c_{1}c_{2}+\nu c_{1}^{3}|,$$
then the following sharp estimate $\Psi(\omega)\leq \Phi(\mu, \nu)$ holds, where
\[\Phi(\mu,\nu)=
\begin{cases}
|\nu|, & (\mu,\nu)\in D_{6},\\
 \frac{2}{3}(|\mu|+1)\left(\frac{|\mu|+1}{3(|\mu|+1+\nu)}\right)^{\frac{1}{2}}, & (\mu,\nu)\in D_{8}\ ,
\end{cases}\]
with
$$D_{6}=\left\{(\mu, \nu): 2\leq|\mu|\leq4,\,\nu\geq\frac{1}{12}(\mu^{2}+8) \right\}$$
and
 $$D_{8}= \left\{(\mu, \nu): \frac{1}{2}\leq|\mu|\leq2,\,
-\frac{2}{3}(|\mu|+1)\leq\nu\leq \frac{4}{27}(|\mu|+1)^{3}-(|\mu|+1)\right\}.$$
Here we  used the notations from \cite{126}.
\end{lem}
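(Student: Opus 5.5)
Lemma~\ref{lem-2} is quoted from \cite{126}, so in the paper it is simply invoked. To prove it directly I would use the Schur parametrization of the first three coefficients of a Schwarz function. This reduces the problem to a finite-dimensional extremal problem.

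The parametrization states that for $\omega$ as in the lemma there are $a,b,\zeta$ with $|a|,|b|,|\zeta|\le 1$ such that
\[
c_1=a,\qquad c_2=(1-|a|^2)b,\qquad c_3=(1-|a|^2)(1-|b|^2)\zeta-(1-|a|^2)\bar a b^2 .
\]
Conversely, every such triple is realized by a Schwarz function, which will give sharpness. Substituting and using the triangle inequality in $\zeta$ (equality is attainable by choosing $\arg\zeta$) gives
\[
\Psi(\omega)\le (1-|a|^2)(1-|b|^2)+\bigl|\nu a^3+\mu a(1-|a|^2)b-(1-|a|^2)\bar a b^2\bigr| .
\]
The functional is invariant under $\omega(z)\mapsto e^{-i\theta}\omega(e^{i\theta}z)$, so I may take $a=r\in[0,1]$. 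Writing $b=xe^{i\varphi}$ with $x\in[0,1]$, it remains to maximize
\[
F(r,x,\varphi)=(1-r^2)(1-x^2)+r\bigl|\nu r^2+\mu(1-r^2)xe^{i\varphi}-(1-r^2)x^2e^{2i\varphi}\bigr|
\]
over $[0,1]^2\times[0,2\pi)$.

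In the region $D_6$, where $\nu\ge(\mu^2+8)/12>0$, I would show $F\le\nu$; the value $\nu$ is attained at $r=1$, i.e. by $\omega(z)=z$. The tool is the bound $|\nu r^2+\mu(1-r^2)xe^{i\varphi}-(1-r^2)x^2e^{2i\varphi}|\le \nu r^2+|\mu|(1-r^2)x+(1-r^2)x^2$. After dividing by $1-r^2$, the claim reduces to the quadratic inequality
\[
(1-x^2)+r|\mu|x+rx^2\le \nu(1+r+r^2)/ (1+r)\cdot(1+r)
\]
in $x$ (suitably rearranged). This should follow from $\nu\ge(\mu^2+8)/12$ and $|\mu|\le4$ by completing the square in $x$.

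In the region $D_8$ the maximum is interior, so I would first fix $\varphi$ so that the two middle terms align. With $\mu>0$ (WLOG after conjugation), the choice $e^{i\varphi}=1$ is natural and makes the modulus equal to $|\nu r^2+(1-r^2)(\mu x-x^2)|$. I would then solve $\partial_x F=\partial_r F=0$. The critical point should give $x=\dots$ depending only on $|\mu|+1$, and a critical value of $r^2$ in which $|\mu|+1+\nu$ appears. Plugging back should yield $\frac23(|\mu|+1)\sqrt{(|\mu|+1)/(3(|\mu|+1+\nu))}$.

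The main obstacle is this two-variable optimization. One must confirm that the critical point lies in $[0,1]^2$ exactly when $(\mu,\nu)\in D_8$, and that it beats the boundary values: $r=0$, $r=1$, $x=0$, $x=1$, and also the alternative sign choice of $\varphi$. These checks produce the defining inequalities of $D_8$, such as $\nu\le\frac4{27}(|\mu|+1)^3-(|\mu|+1)$ and $\nu\ge -\frac23(|\mu|+1)$. This is the step I would try first, by direct comparison of values. Sharpness then follows by reading off $(a,b,\zeta)$ at the maximizer and using the converse part of the Schur parametrization.
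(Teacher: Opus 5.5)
The paper does not prove this lemma; it quotes it from \cite{126}. So your sketch has to stand on its own, and as written it does not. The Schur parametrization and the reduction to $F(r,x,\varphi)$ are fine, up to two slips. First, the normalization $a=r\ge 0$ comes from $\omega(e^{i\theta}z)$, which multiplies $c_n$ by $e^{in\theta}$ and hence $c_3+\mu c_1c_2+\nu c_1^3$ by $e^{3i\theta}$. Your map $e^{-i\theta}\omega(e^{i\theta}z)$ fixes $c_1$ and does not preserve $\Psi$. Second, $\mu\mapsto-\mu$ is $\varphi\mapsto\varphi+\pi$, i.e.\ $\omega(z)\mapsto-\omega(-z)$, not conjugation.

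\textbf{The $D_6$ step fails.} Since $\nu r^2>0$, the terms $\mu xe^{i\varphi}$ and $-x^2e^{2i\varphi}$ can never both point along it. So the three-term triangle inequality is lossy, and it loses exactly what produces the threshold $\frac1{12}(\mu^2+8)$. At $x=1$ your majorant is $\nu r^3+(|\mu|+1)(r-r^3)$, whose derivative at $r=1$ is $3\nu-2(|\mu|+1)$. It therefore stays $\le\nu$ near $r=1$ only if $\nu\ge\frac23(|\mu|+1)$. That is strictly stronger than $\nu\ge\frac1{12}(\mu^2+8)$ whenever $0<|\mu|<8$. Concretely, $(\mu,\nu)=(2,1)\in D_6$ with $r=0.9$, $x=1$ gives $0.9\,(0.81+0.38+0.19)=1.242>1=\nu$, so no completing of squares will close the argument.

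The phase must be kept. To first order in $1-r^2$ one needs $\max_{x,\varphi}\bigl[(1-x^2)+\mu x\cos\varphi-x^2\cos 2\varphi\bigr]\le\frac32\nu$. With $s=x\cos\varphi$ the bracket is $1+\mu s-2s^2\le 1+\frac{\mu^2}{8}$ for $|\mu|\le4$; that is where $D_6$ comes from. The maximizer $\cos\varphi=\mu/(4x)$ is typically interior, so $\varphi$ cannot be fixed in advance. There is also no slack: at $(\mu,\nu)=(2,1)$, $x=1$, the $\varphi$-optimized $F$ equals $1$ for every $r\in[\frac12,1]$.

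\textbf{In $D_8$ you are looking for the extremum in the wrong place.} It is not an interior critical point; it lies on the edge $x=1$, so $\omega(z)=z\frac{r+bz}{1+rbz}$ is a degree-two Blaschke product. For the paper's $\mu=-\frac75$, $\nu=-\frac25$ this is exactly the $\omega$ of $f_{\sqrt{2/5}}$. At the extremum $\mu xe^{i\varphi}$ and $-x^2e^{2i\varphi}$ are aligned, i.e.\ $b=e^{i\varphi}=-\operatorname{sgn}\mu$. Your choice $e^{i\varphi}=1$ with $\mu>0$ makes them opposite.

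On that edge $F=r\bigl((|\mu|+1)-(|\mu|+1+\nu)r^2\bigr)$, maximized at $r^2=\frac{|\mu|+1}{3(|\mu|+1+\nu)}$ with value $\Phi$. The condition $r\le1$ is exactly $\nu\ge-\frac23(|\mu|+1)$. The condition $\Phi\ge1$ (the value at $\omega(z)=z^3$) is exactly $\nu\le\frac4{27}(|\mu|+1)^3-(|\mu|+1)$. At this point $\partial_xF=(1-r^2)\bigl(r(|\mu|+2)-2\bigr)>0$ throughout $D_8$ except at $(|\mu|,\nu)=(2,1)$. So solving $\partial_xF=\partial_rF=0$ will not find it.

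More seriously, the real content of the lemma, that no other $(r,x,\varphi)$ does better, is left as ``compare with boundary values'', and that does not suffice. For $\nu\le0$ the aligned triangle inequality is exact, but you must still bound by $\Phi$ the branch $x=\frac{r|\mu|}{2(1-r)}$, $r<\frac{2}{|\mu|+2}$. For $\nu>0$, which occurs in $D_8$ once $|\mu|>\frac{3\sqrt3}{2}-1$, the optimal $\varphi$ is in general not aligned, which is the same difficulty as in $D_6$. So the parametrization is a reasonable starting point, but neither case is actually proved.
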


\medskip

It is known (see, Ponnusamy {\it et al}, \cite{pon}) that
\begin{thm}\label{th4}
Let $f\in\mathcal{C}$, $f^{-1}$  be given by \eqref{eq-4}, and let $\Gamma_{1}$, $\Gamma_{2}$, $\Gamma _{3}$
be its first three logarithmic coefficients. Then
\begin{itemize}
  \item[($i$)] $|\Gamma_{1}|\leq \frac{1}{2}$;
  \item[($ii$)] $|\Gamma_{2}| \leq\frac{1}{4}$;
  \item [($iii$)]$|\Gamma_{3}|\leq \frac16$.
\end{itemize}
All estimates are sharp.
\end{thm}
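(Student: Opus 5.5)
The plan is to express everything in terms of the coefficients of $f$ and then use the estimates available for convex functions: $|a_n|\le 1$, Lemma~\ref{lem-1}, and Lemma~\ref{lem-2}. As in the proof of Theorem~\ref{th1}, formulas \eqref{eq-3} and \eqref{eq-5} give
\[
\Gamma_1=-\frac{a_2}{2},\qquad \Gamma_2=\frac12\left(-a_3+\frac32a_2^2\right),\qquad \Gamma_3=\frac12\left(-a_4+4a_2a_3-\frac{10}{3}a_2^3\right).
\]
Part (i) follows at once from $|a_2|\le1$.

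For (ii) I will write $-a_3+\frac32a_2^2=-(a_3-a_2^2)+\frac12a_2^2$ and apply Lemma~\ref{lem-1}. This gives
\[
|\Gamma_2|\le\frac12\left(\frac13(1-|a_2|^2)+\frac12|a_2|^2\right)=\frac12\left(\frac13+\frac16|a_2|^2\right).
\]
The right-hand side is at most $\frac14$ because $|a_2|\le1$.

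For (iii) the triangle inequality is too crude, so I will pass to a Schwarz function. Since $f\in\mathcal C$, there is an analytic $\omega(z)=c_1z+c_2z^2+c_3z^3+\cdots$ with $|\omega|<1$ such that
\[
1+\frac{zf''(z)}{f'(z)}=\frac{1+\omega(z)}{1-\omega(z)},\qquad\text{that is,}\qquad \frac{zf''}{f'}=\frac{2\omega}{1-\omega}.
\]
Comparing coefficients in $zf''=f'\cdot 2\omega/(1-\omega)$ is a routine computation. It should give
\[
a_2=c_1,\qquad a_3=\frac13(c_2+3c_1^2),\qquad a_4=\frac16(c_3+5c_1c_2+6c_1^3).
\]
Substituting into $\Gamma_3$, the $c_1^3$ and $c_1c_2$ terms collapse to
\[
\Gamma_3=-\frac1{12}\left(c_3-3c_1c_2+2c_1^3\right).
\]
This is exactly the functional of Lemma~\ref{lem-2} with $\mu=-3$ and $\nu=2$.

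The point where care is needed is checking the region. Here $2\le|\mu|=3\le4$ and $\nu=2\ge\frac1{12}(9+8)=\frac{17}{12}$, so $(\mu,\nu)\in D_6$. Lemma~\ref{lem-2} then gives $|c_3-3c_1c_2+2c_1^3|\le|\nu|=2$, and hence $|\Gamma_3|\le\frac16$.

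For sharpness I will take $\omega(z)=z$, which corresponds to $f(z)=z/(1-z)$ with all $a_n=1$. This function gives $|\Gamma_1|=\frac12$, $|\Gamma_2|=\frac12\cdot\frac12=\frac14$, and $|\Gamma_3|=\frac1{12}\cdot 2=\frac16$. The main risk is an arithmetic slip in the coefficient comparison for $a_4$. I will verify it against $f_\lambda$ from Lemma~\ref{lem-1}, which corresponds to $c_1=\lambda$.
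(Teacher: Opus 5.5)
Your proof is correct: the reduction $\Gamma_3=-\frac1{12}\bigl(c_3-3c_1c_2+2c_1^3\bigr)$ via \eqref{eq-14} checks out, $(\mu,\nu)=(-3,2)$ lies in $D_6$ so Lemma~\ref{lem-2} gives the bound $2$, and $z/(1-z)$ (i.e.\ $\omega(z)=z$) attains all three bounds. The paper does not prove this statement itself, since it quotes it from Ponnusamy et al.; your argument instead uses the machinery the paper assembles for its next theorem --- Lemma~\ref{lem-1} for (ii), and \eqref{eq-14} with Lemma~\ref{lem-2} for (iii), in the region $D_6$ rather than $D_8$ --- so it is essentially the paper's own method.
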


\medskip

We shall find the lower and the upper bound of the differences $|\Gamma_{2}|-|\Gamma_{1}|$ and $|\Gamma_{3}|-|\Gamma_{2}|$ when $f\in\mathcal{C}$.

\medskip

For the second difference we need to express coefficients of $f\in\mathcal{C}$ in terms of appropriate coefficients of a Schwarz function $\omega$. From the definition of convex functions we have
$$1+ \frac{zf''(z)}{f'(z)}=\frac{1+\omega(z)}{1-\omega(z)}, \qquad |\omega(z)|<1,\, z\in \D, $$
and from here
$$\left[zf'(z)\right]'= \left\{1+2\left[\omega(z)+\omega^{2}(z)+\cdots\right]\right\}\cdot f'(z).$$
If we put $f(z)=z+a_2z^2+a_3z^3+\cdots$ and $\omega(z)=c_{1}z+c_2z^2+c_3z^3+\cdots $, then from the last relation,
after comparing the coefficients, we get
\begin{equation}\label{eq-14}
\begin{split}
a_{2}&=c_{1},\\
a_{3}&=\frac{1}{3}(c_{2}+3c_{1}^{2}), \\
a_{4}&=\frac{1}{6}(c_{3}+5c_{1}c_{2}+6c_{1}^{3}).
\end{split}
\end{equation}
Now, we are ready to prove the final theorem.

\medskip

\begin{thm}\label{th4}
Let $f\in\mathcal{C}$ and let  $f^{-1}$  be given by \eqref{eq-4}. Also, let $\Gamma_{1}$, $\Gamma_{2}$, $\Gamma _{3}$
be its initial logarithmic coefficients. Then
\begin{itemize}
  \item[($i$)] $-\frac{\sqrt{10}}{10}\leq |\Gamma_{2}|-|\Gamma_{1}|\leq \frac{1}{6}.$
\item[($ii$)]  $|\Gamma_{3}|-|\Gamma_{2}|\leq\frac{2\sqrt{10}}{75}.$
\end{itemize}
All inequalities are sharp.
\end{thm}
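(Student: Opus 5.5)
Throughout I write $x=|a_2|\in[0,1]$ and use the formulas from the proof of Theorem~\ref{th1}:
\[
\Gamma_1=-\tfrac12 a_2,\qquad \Gamma_2=\tfrac12\left(\tfrac12a_2^2-(a_3-a_2^2)\right),\qquad \Gamma_3=\tfrac12\left(-a_4+4a_2a_3-\tfrac{10}{3}a_2^3\right).
\]
For (i) I would only use Lemma~\ref{lem-1}, which gives $t:=a_3-a_2^2$ with $|t|\le\frac13(1-x^2)$.

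\emph{Upper bound in (i).} Here
\[
|\Gamma_2|\le\tfrac12\left(\tfrac{x^2}{2}+\tfrac{1-x^2}{3}\right)=\tfrac16+\tfrac{x^2}{12},
\]
so $|\Gamma_2|-|\Gamma_1|\le \frac16+\frac{x^2}{12}-\frac x2\le\frac16$ on $[0,1]$. Equality holds at $x=0$, for instance for $f_0$ in Lemma~\ref{lem-1}, where $a_2=0$ and $a_3=\frac13$.

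\emph{Lower bound in (i).} I split at $x_0=\sqrt{2/5}$.
\begin{itemize}
\item If $x\le x_0$, trivially $|\Gamma_2|-|\Gamma_1|\ge -\frac x2\ge-\frac{x_0}{2}=-\frac{\sqrt{10}}{10}$.
\item If $x\ge x_0$, the reverse triangle inequality gives $|\Gamma_2|\ge \frac12\left(\frac{x^2}{2}-\frac{1-x^2}{3}\right)=\frac{5x^2-2}{12}\ge0$. Hence $|\Gamma_2|-|\Gamma_1|\ge \frac{5x^2-2}{12}-\frac x2$. This is increasing for $x\ge\frac35$, and $x_0>\frac35$, so its minimum is at $x_0$, where it equals $-\frac{\sqrt{10}}{10}$.
\end{itemize}
For sharpness I take $f_\lambda$ from Lemma~\ref{lem-1} with $\lambda=\sqrt{2/5}$. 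Then $a_3-a_2^2=\frac{1-\lambda^2}{3}$ exactly, so $\Gamma_2=\frac{5\lambda^2-2}{12}=0$ and $|\Gamma_1|=\frac{\sqrt{10}}{10}$.

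\emph{Part (ii): reduction to Schwarz coefficients.} I imitate Theorem~\ref{th3}. For any real $\theta\in[0,1]$ we have $|\theta a_2|\le 1$, hence
\[
|\Gamma_3|-|\Gamma_2|\le|\Gamma_3-\theta a_2\Gamma_2|.
\]
Substituting \eqref{eq-14}, a routine computation gives
\[
\Gamma_3-\theta a_2\Gamma_2=-\tfrac1{12}\left(c_3-(3+2\theta)c_1c_2+(2+3\theta)c_1^3\right).
\]
I would then try to bound the right-hand side by Lemma~\ref{lem-2} with $\mu=-(3+2\theta)$ and $\nu=2+3\theta$.

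\emph{Main obstacle.} The natural choices of $\theta$ do not give the claimed constant. For $\theta=0$ the pair $(\mu,\nu)=(-3,2)$ lies in $D_6$ and yields only $\frac1{12}\cdot2=\frac16$. Reaching $D_8$ requires $|\mu|\le2$, i.e.\ $\theta\le-\frac12$, and then the reduction above is no longer valid. So the crude reduction loses the sharp constant $\frac{2\sqrt{10}}{75}=\frac1{12}\cdot\frac{8\sqrt{10}}{25}$, and one has to use $|\Gamma_2|$ exactly instead.

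What I would try first is a direct computation in the Schwarz coefficients. Write
\[
\Gamma_2=\tfrac1{12}(3c_1^2-2c_2),\qquad \Gamma_3=-\tfrac1{12}(c_3-3c_1c_2+2c_1^3).
\]
Use $c_2=(1-|c_1|^2)w$ with $|w|\le1$, and the sharp estimate $|c_3|\le 1-|c_1|^2-\frac{|c_2|^2}{1+|c_1|}$. This reduces the problem to maximizing an explicit function of $r=|c_1|$, $|w|$ and an argument. The extremum should occur at an interior point where $3c_1^2-2c_2$ and $c_3-3c_1c_2+2c_1^3$ are aligned, and I expect the value $\frac{8\sqrt{10}}{25}$ to arise there, with a $\sqrt{10}$ appearing as in part (i).

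Alternatively, one could choose a complex multiplier $\varepsilon$, depending on the point, with $|\varepsilon|\,|\Gamma_2|$ matched so that the combined functional falls into $D_8$; Lemma~\ref{lem-2} then gives the constant, and the equality case of Lemma~\ref{lem-2} provides the extremal function. This optimization is the step I expect to be hardest.
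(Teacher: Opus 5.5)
\textbf{Verdict.} Part (i) is correct and follows the paper's argument: Lemma~\ref{lem-1}, the split at $|a_2|=\sqrt{2/5}$, and the extremals $f_0$ and $f_{\sqrt{2/5}}$. Part (ii), however, is not proved, and the reason is an obstruction that does not exist.

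\textbf{The gap.} You restrict to $\theta\in[0,1]$ and claim that for $\theta\le-\frac12$ the reduction is ``no longer valid''. The reduction never uses the sign of $\theta$. For every $\theta$ with $|\theta|\le1$ (even complex) we have $|\theta a_2|\le 1$, and $|u|-|v|\le|u-v|$ gives
\[
|\Gamma_3|-|\Gamma_2|\le|\Gamma_3|-|\theta a_2|\,|\Gamma_2|=|\Gamma_3|-|\theta a_2\Gamma_2|\le|\Gamma_3-\theta a_2\Gamma_2| .
\]
So the whole range $\theta\in[-1,-\frac12]$, which is exactly what is needed to reach $D_8$, is admissible.

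\textbf{The missing step.} This is the paper's proof with $\theta=-\frac45$: it uses $-1\le-\frac45|a_2|$ and bounds by $|\Gamma_3+\frac45a_2\Gamma_2|$. Your own formula then gives $(\mu,\nu)=(-\frac75,-\frac25)$. With $|\mu|+1=\frac{12}5$ one checks
\[
-\tfrac85\le-\tfrac25\le\tfrac4{27}\bigl(\tfrac{12}5\bigr)^3-\tfrac{12}5=-\tfrac{44}{125},
\]
so $(\mu,\nu)\in D_8$. Lemma~\ref{lem-2} yields $\Phi=\frac23\cdot\frac{12}5\sqrt{\frac{12/5}{3\cdot2}}=\frac85\sqrt{\frac25}$, hence
\[
|\Gamma_3|-|\Gamma_2|\le\frac1{12}\cdot\frac85\sqrt{\frac25}=\frac{2\sqrt{10}}{75}.
\]
This choice is optimal on the segment: writing $s=4+2\theta$ for $\theta\in[-1,-\frac12]$, one gets $\Phi^2=\frac{8s^3}{27(5s-8)}$, which is minimized at $s=\frac{12}5$, i.e.\ $\theta=-\frac45$.

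\textbf{Sharpness of (ii) and your alternatives.} The two alternatives you sketch are stated only as expectations, and you give no extremal function for (ii). The extremal is the same $f_{\sqrt{2/5}}$ as in (i). There $\Gamma_2=0$ and
\[
\Gamma_3=\tfrac12\bigl(-\tfrac45+\tfrac{12}5-\tfrac43\bigr)\sqrt{\tfrac25}=\tfrac{2\sqrt{10}}{75}.
\]
Because $\Gamma_2=0$ at this function, $\Gamma_3-\theta a_2\Gamma_2=\Gamma_3$ for every $\theta$, so no multiplier could give a smaller constant. This also shows the extremum is not an interior ``alignment'' point as you guessed. It is the degenerate point $3c_1^2=2c_2$, attained by the Blaschke product $\omega(z)=z(z+\lambda)/(1+\lambda z)$ with $\lambda=\sqrt{2/5}$.
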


\medskip

\begin{proof} $ $
\begin{itemize}
  \item[(i)]  Using similar consideration as in Theorem \ref{th3}, the result of Lemma \ref{lem-2}, and $|a_{2}|\leq1$, for the upper bound we get
\[
\begin{split}
|\Gamma_{2}|-|\Gamma_{1}|&=\frac{1}{2}\left|-a_{3}+\frac{3}{2}a_{2}^{2}\right|-\frac{1}{2}|-a_{2}|\\
&=\frac{1}{2}\left|(-a_{3}+a_{2}^{2})+\frac{1}{2}a_{2}^{2}\right|-\frac{1}{2}|a_{2}|\\
&\leq\frac{1}{2}|a_{3}-a_{2}^{2}|+ \frac{1}{4}|a_{2}|^{2}-\frac{1}{2}|a_{2}|\\
&\leq \frac{1}{6}(1-|a_{2}|^{2})+\frac{1}{4}|a_{2}|^{2}-\frac{1}{2}|a_{2}|\\
&\leq\frac{1}{6}.
\end{split}
\]
The inequality is sharp for the function $f_{0}(z)=z+\frac{1}{3}z^{3}+\cdots$ given in Lemma \ref{lem-1}.

\medskip

The lower bound of the inequality is equivalent with
\begin{equation}\label{eq-15}
\left|-a_{3}+\frac{3}{2}a_{2}^{2}\right|\geq|a_{2}|-\sqrt{\frac{2}{5}}.
\end{equation}
If $0\leq |a_{2}|<\sqrt{2/5}$, then the inequality \eqref{eq-15} obviously holds.
Now, let $\sqrt{2/5}\leq |a_{2}|\leq1$. Then, using Lemma \ref{lem-2}, since
\[
\begin{split}
\left|-a_{3}+\frac{3}{2}a_{2}^{2}\right| &\geq \frac{1}{2}|a_{2}|^{2}-|-a_{3}+a_{2}^{2}| \\
&\geq \frac{1}{2}|a_{2}|^{2}-\frac{1}{3}(1-|a_{2}|^{2})\\
&=\frac{5}{6}|a_{2}|^{2}-\frac{1}{3},
\end{split}
\]
we conclude that to prove \eqref{eq-15} it is enough to show that
$$ \frac{5}{6}|a_{2}|^{2}-\frac{1}{3}\geq |a_{2}|-\sqrt{\frac{2}{5}}.$$
The last inequality is equivalent to
$$ \left(|a_{2}|-\sqrt{\frac{2}{5}}\right)\left(\frac{5}{6}\left(|a_{2}|+\sqrt{\frac{2}{5}}\right)-1\right)\geq0,$$
which is true since by assumption $|a_{2}| -\sqrt{\frac{2}{5}}\geq 0$  and
$\frac{5}{6}\left(|a_{2}|+\sqrt{\frac{2}{5}}\right)-1\geq \frac{5}{6}\cdot2\sqrt{\frac{2}{5}}-1=\sqrt{\frac{10}{9}}-1>0$. This proves the inequality \eqref{eq-15} and the lower bound of (i).

\medskip

  \item[(ii)]  In the proof of Theorem \ref{th3} we obtained that $\Gamma_{2}=\frac{1}{2}\left(-a_{3}+\frac{3}{2}a_{2}^{2}\right)$ and
$\Gamma_{3}=\frac{1}{2}\left(-a_{4}+4a_{2}a_{3}-\frac{10}{3}a_{2}^{3}\right)$.
Since $|a_{2}|\leq1$, so also $|a_{2}|\leq\frac54$ and $-1\leq -\frac45 |a_2|$. Therefore,
\[
\begin{split}
|\Gamma_{3}|-|\Gamma_{2}|&\leq |\Gamma_{3}|-\frac45 |a_{2}||\Gamma_{2}|\\
&\leq \left|\Gamma_{3}+\frac45 a_{2}\Gamma_{2}\right|\\
&=\frac{1}{2}\left|a_{4}-\frac{16}{5}a_{2}a_{3}+\frac{32}{15}a_{2}^{3}\right|.
\end{split}
\]
Using \eqref{eq-14} and Lemma \ref{lem-2} (case $D_{8}$), after some calculations, we get
$$|\Gamma_{3}|-|\Gamma_{2}|\leq \frac{1}{12} \left|c_{3}- \frac75c_{1}c_{2}-\frac25c_{1}^{3}\right|\leq\frac{1}{12}\cdot\frac{8}{5}\sqrt{\frac{2}{5}}=\frac{2\sqrt{10}}{75}.$$
\end{itemize}

\medskip

For showing the sharpness, it is enough to consider the function $f_\lambda$ from Lemma  \ref{lem-1} with $\lambda=\sqrt{2/5}$, i.e.,
$$f_{\sqrt{2/5}}(z)=z+\sqrt{\frac{2}{5}}z^{2}+\frac{3}{5}z^{3}+\frac45\sqrt{\frac{2}{5}}z^{4}\cdots,$$
such that
\[a_2=\sqrt{\frac{2}{5}}\ ,\quad a_3=\frac{3}{5}\ ,\quad\mbox{and} \quad  a_4=\frac45\sqrt{\frac{2}{5}}\ ,\]
and further,
\[\Gamma_1=-\frac{\sqrt{10}}{10}\ ,\quad \Gamma_2=0\, \quad\mbox{and} \quad  \Gamma_3=\frac{2\sqrt{10}}{75}\ .\]
The above means that all inequalities in Theorem \ref{th4} are sharp.
\end{proof}

\end{document}